\documentclass{proc-l-hijacked}
\usepackage{amssymb, amsmath, amsthm, hyperref,}

\newtheorem{theorem}{Theorem}[section]
\newtheorem{lemma}[theorem]{Lemma}

\theoremstyle{definition}

\newtheorem{example}[theorem]{Example}


\numberwithin{equation}{section}

\DeclareMathOperator{\dist}{dist}

\begin{document}

	\title[]{Geometry of the Spectral Semidistance in Banach algebras}
	\author{G. Braatvedt and R. Brits}
	\address{Department of Mathematics, University of Johannesburg, South Africa}
	\email{gabraatvedt@uj.ac.za, rbrits@uj.ac.za}
	\subjclass[2010]{46H05, 47A05, 47A10}
	\keywords{asymptotically intertwined, Riesz projections, spectral semidistance, quasinilpotent equivalent}

\begin{abstract}
Let $A$ be a unital Banach algebra over $\mathbb C$, and suppose that the nonzero spectral values of, respectively, $a\mbox{ and }b\in A$ are discrete sets which cluster at $0\in\mathbb C$, if anywhere. We develop a plane geometric formula for the spectral semidistance of $a$ and $b$ which depends on the two spectra, and the orthogonality relationships between the corresponding sets of Riesz projections associated with the nonzero spectral values. Extending a result of Brits and Raubenheimer, it is further shown that $a$ and $b$ are quasinilpotent equivalent if and only if all the Riesz projections, $p(\alpha,a)$ and $p(\alpha,b)$,  correspond. For certain important classes of decomposable operators (compact, Riesz, etc.) the proposed formula reduces the involvement of the underlying Banach space $X$ in the computation of the spectral semidistance, and appears to be a useful alternative to Vasilescu's geometric formula (which requires knowledge of the local spectra of the operators at each $0\not=x\in X$). The apparent advantage gained through the use of a global spectral parameter in the formula aside, the various methods of complex analysis could then be employed to deal with the spectral projections; we give examples illustrating the utility of the main results.
\end{abstract}
	\parindent 0mm
	
	\maketitle

\section{Introduction}
 Let $A$ denote a complex Banach algebra with identity $\mathbf 1$. For $a,b\in A$ associate operators $L_a$, $R_b$, and $C_{a,b}$, acting on $A$, by
the relations
\[L_ax=ax, \quad R_bx=xb,\quad \hbox{and}\quad C_{a,b}x=(L_a-R_b)x\ \mbox{ for each } x\in A.\]
Since $L_a$ and $R_b$ commute it is easy that
\[C_{a,b}^nx=\sum_{k=0}^n(-1)^k{n \choose k}a^{n-k}xb^k\ \mbox{ for each } x\in A,\] with the convention that, if $0\not=a\in A$,
then $a^0=\mathbf 1$.
Using the particular value $x=\mathbf 1$, define $\varrho:A\times A\rightarrow \mathbb R$ by
\begin{equation}\label{varrho}
\varrho (a,b)=\limsup_n\left\| C_{a,b}^n\mathbf 1\right\| ^{1/n},
\end{equation}
 and then define
 \begin{equation}\label{rho}
\rho(a,b)=\sup \{\varrho (a,b),\varrho (b,a)\}.
\end{equation}

  If $X$ is a Banach space, and $A=\mathcal L(X)$, the Banach algebra of bounded linear operators from $X$ into $X$, then the number $\varrho(S,T)$ is a well-established quantity called the \emph{local spectral radius} \cite[p.235]{l+n} of the commutator $C_{S,T}\in\mathcal L(A)$ at $I$. The number $\rho(S,T)$ is called the \emph{spectral distance} \cite[p.251]{l+n} of the operators $S$ and $T$. Furthermore, the pair $(S,T)$ is said to be \emph{asymptotically intertwined} \cite[p.248]{l+n} by the identity, $I$, if $\varrho(S,T)=0$. If each of the pairs $(S,T)$ and $(T,S)$ is asymptotically intertwined by the identity operator (i.e. $\rho(S,T)=0$), then $S$ and $T$ are called \emph{quasinilpotent equivalent} \cite[p.253]{l+n}. A first generalization in the framework
of Banach algebras on topics related to the commutator appeared in Section III.4 of the monograph \cite{vas3}.
In the paper \cite{razpet} $\rho$ is called the \emph{spectral semidistance} which is perhaps a little more appropriate in view of the fact that $\rho$ is only a semimetric \cite[Proposition 3.4.9]{l+n}. One may think of the spectral semidistance as a noncommutative generalization of the distance induced by the spectral radius when $a$ and $b$ do commute.  Again, if $\rho(a,b)=0$, then $a$ and $b$ are said to be \emph{quasinilpotent equivalent.}
 A good source of results on the topic of the spectral (semi)distance is Laursen and Neumann's recent monograph \cite{l+n}; the Reader may also want to look at \cite{b+r, c+f, f+v, razpet, vas1, vas2}. We should mention the following simple but useful property of $\varrho$ and $\rho$ which appears explicitly in \cite[Lemma 2.2]{b+r}: If $q_a$ and $q_b$ are quasinilpotent elements of $A$ commuting with, respectively, $a$ and $b$, then $\varrho(a,b)=\varrho(a+q_a,b+q_b)$.

 The results in the present paper are related to Vasilescu's geometric formula \cite{vas2} for the spectral semidistance of decomposable operators $S,T\in\mathcal L(X)$:
  \begin{equation*}
  \rho(S,T)=\sup\{\max\{\dist(\lambda,\sigma_T(x)),\dist(\mu,\sigma_S(x))\}:x\not=0,\lambda\in\sigma_S(x),\mu\in\sigma_T(x)\}
  \end{equation*}
where $\sigma_S(x)$ and $\sigma_T(x)$ are, respectively, the local spectra of $S$ and $T$ at $x\in X$.

 The usual spectrum of $a\in A$ will be denoted by $\sigma (a,A)$, the ``nonzero" spectrum, $\sigma (a,A)\backslash\{0\}$, by $\sigma^\prime(a,A)$, and the spectral radius of  $a\in A$ by $r_\sigma(a,A)$. Whenever there is no ambiguity we shall omit the $A$ in $\sigma$ and $r_\sigma$.

If $a\in A$ and $\alpha\in\mathbb C$ is not an accumulation point of $\sigma(a)$, then let $\Gamma_{\alpha}$ be a small circle, disjoint from $\sigma(a)$, and isolating $\alpha$ from the remaining spectrum of $a$. We denote by
 \[p(\alpha,a)=\frac{1}{2{\pi}i}\int_{\Gamma_{\alpha}}(\lambda\mathbf 1 -a)^{-1}\,d\lambda\] the \emph{Riesz projection associated with $a$ and $\alpha$}. If $\alpha\notin\sigma(a)$, then, by Cauchy's Theorem, $p(\alpha,a)=0$. For Riesz projections $p(\alpha_1,a)\mbox{ and }p(\alpha_2,a)$, with $\alpha_1\not=\alpha_2$, the Functional Calculus implies that $p(\alpha_1,a)p(\alpha_2,a)=p(\alpha_2,a)p(\alpha_1,a)=0$.

We recall the following well-known ``spectral decomposition" (see \cite[p.21]{alexander}) from the Theory of Banach algebras:

\begin{lemma}\label{rep} Suppose $a\in A$ has $\sigma(a)=\{\lambda_1,\dots,\lambda_n\}$. Then $a$ has the representation
\[a=\lambda_1p_1+\cdots+\lambda_np_n+r_a\] where $p_i=p(\lambda_i,a)$, $\sum p_i=\mathbf 1$, and $r_a$ is a quasinilpotent element belonging to the
bicommutant of $a$.
\end{lemma}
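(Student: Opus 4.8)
The natural engine for this proof is the holomorphic (Riesz--Dunford) functional calculus, since the hypothesis forces every $\lambda_i$ to be an isolated point of $\sigma(a)$, so that each $p_i=p(\lambda_i,a)$ is well defined. The plan is to realize all three assertions --- the decomposition, the completeness relation $\sum p_i=\mathbf 1$, and the quasinilpotence of $r_a$ --- as consequences of the homomorphism property and the spectral mapping theorem of the functional calculus, rather than by direct estimation of $\|r_a^n\|^{1/n}$.

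First I would record the algebra of the projections. Choosing pairwise disjoint open neighborhoods $U_1,\dots,U_n$ of the $\lambda_i$ with boundaries $\Gamma_i$, the functional calculus sends the indicator function $e_i$ (holomorphic on $U=\bigcup_j U_j$, equal to $1$ on $U_i$ and $0$ on the other $U_j$) to $p_i$. Since $\sum_i e_i$ is the constant function $1$ on $U\supseteq\sigma(a)$, and the functional calculus carries the constant $1$ to $\mathbf 1$, the homomorphism property immediately yields $\sum_i p_i=\mathbf 1$. The orthogonality $p_ip_j=0$ (already noted above) and the idempotence $p_i^2=p_i$ likewise drop out of $e_ie_j=\delta_{ij}e_i$.

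The crux is identifying $r_a$ with an element produced by the functional calculus, so that its spectrum can be read off. Define $g$ on $U$ by $g(\lambda)=\lambda-\lambda_i$ on each $U_i$; this is holomorphic on a neighborhood of $\sigma(a)$. Writing out the contour integral and using the localized identity $\frac{1}{2\pi i}\int_{\Gamma_i}\lambda(\lambda\mathbf 1-a)^{-1}\,d\lambda=ap_i$, I expect each piece to collapse to $(a-\lambda_i\mathbf 1)p_i$, so that $g(a)=\sum_i(a-\lambda_i\mathbf 1)p_i=a\sum_i p_i-\sum_i\lambda_ip_i=a-\sum_i\lambda_ip_i=r_a$. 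The spectral mapping theorem then gives $\sigma(r_a)=\sigma(g(a))=g(\sigma(a))=\{g(\lambda_i)\}_i=\{0\}$, so $r_a$ is quasinilpotent.

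Finally, the bicommutant membership is essentially automatic: any $x\in A$ commuting with $a$ commutes with each resolvent $(\lambda\mathbf 1-a)^{-1}$, hence with each contour integral $p_i$ and with $r_a=g(a)$; thus $r_a,p_i\in\{a\}''$. I anticipate the only delicate point to be the justification of the localized identity, which I would obtain by substituting $\lambda\mathbf 1=(\lambda\mathbf 1-a)+a$ inside the integral: this splits $\frac{1}{2\pi i}\int_{\Gamma_i}\lambda(\lambda\mathbf 1-a)^{-1}\,d\lambda$ into $\frac{1}{2\pi i}\int_{\Gamma_i}\mathbf 1\,d\lambda=0$ and $a\cdot\frac{1}{2\pi i}\int_{\Gamma_i}(\lambda\mathbf 1-a)^{-1}\,d\lambda=ap_i$. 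Everything else is bookkeeping with the homomorphism property.
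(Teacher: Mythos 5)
Your proof is correct. The paper itself offers no proof of this lemma --- it is recalled as a well-known fact with a citation to Alexander and Wermer --- and your argument via the holomorphic functional calculus (indicator functions $e_i$ giving $\sum p_i=\mathbf 1$ and orthogonality, the function $g(\lambda)=\lambda-\lambda_i$ on $U_i$ giving $r_a=g(a)$ with $\sigma(r_a)=g(\sigma(a))=\{0\}$ by spectral mapping, and bicommutant membership via commutation with resolvents) is precisely the standard proof the citation points to; all steps, including the localized identity $\frac{1}{2\pi i}\int_{\Gamma_i}\lambda(\lambda\mathbf 1-a)^{-1}\,d\lambda=ap_i$, check out.
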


 It is worthwhile to mention here a curious connection which relates $\varrho$ to the growth characteristics of a certain entire map from $\mathbb C$ into $A$: Let $f$ be an entire $A$-valued function. Then $f$ has an everywhere convergent power series expansion
\[f(\lambda)=\sum_{n=0}^\infty a_n\lambda^n,\] with coefficients $a_n$ belonging to $A$. Define a function $M_f(r)=\sup\limits_{|\lambda|\leq r}\|f(\lambda)\|$, $r>0$. The function $f$ is said to be of \emph{finite order} if there exists $K>0$ and $R>0$ such that $M_f(r)<e^{r^K}$ holds for all $r>R$. The infimum of the set of positive real numbers, $K$, such that the preceding inequality holds is called the \emph{order} of $f$, denoted by $\omega_f$. If $\omega_f=1$ then $f$ is said to be of \emph{exponential order}. Suppose $f$ is entire, and of finite order $\omega:=\omega_f$. Then $f$ said to be of \emph{finite type} if there exists $L>0$ and $R>0$ such that $M_f(r)<e^{Lr^{\omega}}$ holds for all $r>R$. The infimum of the set of positive real numbers, $L$, such that the preceding inequality holds is called the \emph{type} of $f$, denoted by $\tau_f$. It it known (see the monograph \cite[p.41]{levin}) that the order and type are given by the formulas
\[\omega_f=\limsup_n\left(\frac{n\log n}{\log\|a_n\|^{-1}}\right)\mbox{ and }\tau_f=\frac{1}{e\omega_f}\limsup_n\left(n\sqrt[n]{{\|a_n\|^{\omega_f}}}\right).\]
Concerning the formula for $\tau_f$, we remark that if $f$ is of order $0$ and finite type, then it follows directly from the definition, together with Liouville's Theorem, that $f$ must be constant.

 Let $a,b\in A$, and define
\[f:\lambda\mapsto e^{\lambda a}e^{-\lambda b},\hskip 0,5 cm \lambda\in\mathbb C.\] The corresponding series expansion, valid for all $\lambda\in\mathbb C$, is given by
\[f(\lambda)=e^{{\lambda}a}e^{-{\lambda}b}=\sum_{n=0}^\infty\frac{{\lambda}^nC_{a,b}^n\mathbf 1}{n!}.\]
Since $\|f(\lambda)\|\leq e^{\left(\|a\|+\|b\|\right)|\lambda|}$, for all $\lambda\in\mathbb C$, it is immediate, from the definition, that $f$ is of order at most one. Suppose we know that $f$ is of exponential order (i.e. $\omega_f=1$). Recall now, using Stirling's formula, that $\lim_nn(1/n!)^{1/n}=e$, from which we subsequently obtain
 \[\tau_f=\frac{1}{e}\limsup_n\left(n(1/n!)^{1/n}\left\| C_{a,b}^n\mathbf 1\right\| ^{1/n}\right)=\varrho(a,b).\] To start with, we give a really brief argument, using these ideas, which quickly leads to (an improvement of) the main result in Section $4$ of \cite{b+r}.

\begin{theorem}\label{growth} If $\sigma(a)$ and $\sigma(b)$ are finite, then $\varrho(a,b)=0$ if and only if $a-r_a=b-r_b$ where $r_a$ and $r_b$ are quasinilpotent elements commuting with $a$ and $b$ respectively.
\end{theorem}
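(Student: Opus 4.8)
The plan is to treat the two implications separately, using the spectral decomposition of Lemma~\ref{rep} to strip off the quasinilpotent parts together with the quoted identity $\varrho(a,b)=\varrho(a+q_a,b+q_b)$ (valid for quasinilpotents $q_a,q_b$ commuting with $a,b$) to reduce everything to the ``semisimple'' elements $a-r_a$ and $b-r_b$. The reverse implication is the quick one: assuming $a-r_a=b-r_b=:c$, I would apply the quoted identity with $q_a=-r_a$ and $q_b=-r_b$ (both quasinilpotent, and commuting with $a,b$ by Lemma~\ref{rep}) to get $\varrho(a,b)=\varrho(c,c)$; since $C_{c,c}\mathbf 1=c\mathbf 1-\mathbf 1 c=0$, every higher power $C_{c,c}^n\mathbf 1$ vanishes and $\varrho(c,c)=0$.

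For the forward implication, write $a=\sum_i\lambda_ip_i+r_a$ and $b=\sum_j\mu_jq_j+r_b$ as in Lemma~\ref{rep}, with $p_i=p(\lambda_i,a)$ and $q_j=p(\mu_j,b)$. The same reduction gives $\varrho(a,b)=\varrho(a-r_a,b-r_b)=0$, so I may assume outright that $a=\sum_i\lambda_ip_i$ and $b=\sum_j\mu_jq_j$, the goal now being $a=b$. Feeding $a^{m}=\sum_i\lambda_i^m p_i$ and $b^{m}=\sum_j\mu_j^m q_j$ into the binomial expression for $C_{a,b}^n\mathbf 1$ and collapsing the inner sum by the binomial theorem yields the clean formula
\[
C_{a,b}^n\mathbf 1=\sum_{i,j}(\lambda_i-\mu_j)^n\,p_iq_j .
\]

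The crux --- and the step I expect to be the main obstacle --- is to show that $\varrho$ genuinely detects each difference $\lambda_i-\mu_j$ coming from a nonzero product $p_iq_j$, that is, $\varrho(a,b)\ge|\lambda_{i_0}-\mu_{j_0}|$ whenever $p_{i_0}q_{j_0}\ne0$. Here I would exploit the mutual orthogonality $p_{i_0}p_i=\delta_{ii_0}p_{i_0}$ and $q_jq_{j_0}=\delta_{jj_0}q_{j_0}$ of the Riesz projections: choosing $\phi\in A^{*}$ with $\phi(p_{i_0}q_{j_0})\ne0$ and setting $\psi(x)=\phi(p_{i_0}xq_{j_0})$, the sandwich annihilates every term of the displayed sum except the pair $(i_0,j_0)$, so $\psi(C_{a,b}^n\mathbf 1)=\phi(p_{i_0}q_{j_0})(\lambda_{i_0}-\mu_{j_0})^n$. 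Taking $n$-th roots and using $|\psi(C_{a,b}^n\mathbf 1)|\le\|\psi\|\,\|C_{a,b}^n\mathbf 1\|$ (so that the left side tends to $|\lambda_{i_0}-\mu_{j_0}|$) forces $|\lambda_{i_0}-\mu_{j_0}|\le\varrho(a,b)$. This is the point where one could alternatively invoke the order/type machinery --- $f$ is now an exponential polynomial, of order exactly one unless it is constant, with $\tau_f=\varrho(a,b)$ --- but the functional argument has the advantage of sidestepping the need to verify that a non-constant exponential polynomial has order precisely one.

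With $\varrho(a,b)=0$ the inequality just established forces $\lambda_i=\mu_j$ for every pair with $p_iq_j\ne0$. It then remains to run the routine linear-algebra step: for fixed $j$ one has $aq_j=\sum_i\lambda_ip_iq_j=\sum_i\mu_j p_iq_j=\mu_jq_j$, the equality $\lambda_ip_iq_j=\mu_jp_iq_j$ holding termwise (trivially when $p_iq_j=0$ and by the above otherwise) and using $\sum_ip_i=\mathbf 1$; summing over $j$ with $\sum_jq_j=\mathbf 1$ gives $a=\sum_j\mu_jq_j=b$. Undoing the reduction yields $a-r_a=b-r_b$, which completes the proof.
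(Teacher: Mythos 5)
Your proof is correct, and the key step takes a genuinely different route from the paper's. The paper performs the same reduction to the semisimple parts $\bar a=\sum_i\lambda_ip_i$, $\bar b=\sum_j\beta_jq_j$, but then forms the entire function $f(\lambda)=e^{\lambda\bar a}e^{-\lambda\bar b}$, sandwiches it as $g_{i,j}(\lambda)=p_if(\lambda)q_j=e^{(\lambda_i-\beta_j)\lambda}p_iq_j$, and derives a contradiction by computing the \emph{type} of $g_{i,j}$ in two ways: it is $\varrho(a,b)=0$ from the power series $\sum_n\lambda^np_i(C^n_{\bar a,\bar b}\mathbf 1)q_j/n!$, yet equal to $|\lambda_i-\beta_j|\not=0$ from the closed form; it then concludes $f\equiv\mathbf 1$ and differentiates to get $\bar a=\bar b$. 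Your sandwich $x\mapsto\phi(p_{i_0}xq_{j_0})$ applied to the identity $C^n_{a,b}\mathbf 1=\sum_{i,j}(\lambda_i-\mu_j)^np_iq_j$ is the exact finite-power analogue of the paper's $g_{i,j}$, but it replaces the order/type machinery (Levin, Stirling, and the need to check that $g_{i,j}$ has order precisely one) with the elementary estimate $|\lambda_{i_0}-\mu_{j_0}|^n\,|\phi(p_{i_0}q_{j_0})|\le\|\psi\|\,\|C^n_{a,b}\mathbf 1\|$ followed by $n$-th roots. What you lose is only the thematic connection to growth of entire functions that the paper is deliberately showcasing in its introduction; what you gain is a self-contained argument that in fact proves the sharper inequality $\varrho(a,b)\ge\sup\{|\lambda_i-\mu_j|:p_iq_j\not=0\}$ --- half of the geometric formula of Theorem~\ref{char} --- and your displayed identity combined with the triangle inequality yields the other half at once. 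Your closing computation ($aq_j=\mu_jq_j$, then summing over $j$ using $\sum_jq_j=\mathbf 1$) is a clean substitute for the paper's differentiation of $f\equiv\mathbf 1$; the only point worth flagging is that the formula $a^m=\sum_i\lambda_i^mp_i$ at $m=0$ rests on $a^0=\mathbf 1=\sum_ip_i$, consistent with the convention $0^0=1$ in case $0\in\sigma(a)$.
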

\begin{proof}
The reverse implication is trivial as in \cite{b+r}. With Lemma~\ref{rep} we can write  $a-r_a=\sum_{j=1}^n\lambda_jp_j$ and $b-r_b=\sum_{j=1}^k\beta_jq_j$. Denote $\bar a=a-r_a$, $\bar b=b-r_b$, and define $f(\lambda)=e^{\lambda \bar a}e^{-\lambda \bar b}$.  Notice, since $\sum_{j=1}^np_j=\mathbf 1$ and $\sum_{j=1}^kq_j=\mathbf 1$, and using the orthogonality, we have

\begin{equation}\label{finitespec}
f(\lambda)=\left[\mathbf 1+\sum_{j=1}^n(e^{\lambda_j\lambda}-1)p_j\right]\left[\mathbf 1+\sum_{j=1}^k(e^{-\beta_j\lambda}-1)q_j\right]
=\sum\limits_{i,j}e^{(\lambda_i-\beta_j)\lambda}p_iq_j.
\end{equation}

 Fix any $i\in\{1,\dots,n\},\,j\in\{1,\dots,k\}$ such that $p_iq_j\not=0$, and define \[g_{i,j}(\lambda)=p_if(\lambda)q_j=e^{(\lambda_i-\beta_j)\lambda}p_iq_j.\] Let us assume $\lambda_i\not=\beta_j$. If we notice, using Stirling's formula, that $\lim_n\frac{n\log n}{\log n!}=1$, then the coefficient formula for the order applied to
   the representation $g_{i,j}(\lambda)=e^{(\lambda_i-\beta_j)\lambda}p_iq_j$ shows that $g_{i,j}$ is of exponential order.
  But now, on the one hand, using the submultiplicative norm inequality, the representation \[g_{i,j}(\lambda)=\sum_{n=0}^\infty\frac{{\lambda}^np_i\left(C_{\bar a,\bar b}^n\mathbf 1\right)q_j}{n!}\] gives the type of $g_{i,j}$ as $\varrho(\bar a,\bar b)=\varrho(a,b)=0$, and on the other, the representation $g_{i,j}(\lambda)=e^{(\lambda_i-\beta_j)\lambda}p_iq_j$ says the type is equal to
  $|\lambda_i-\beta_j|\not=0$. From this contradiction we may conclude that, for each pair $i,j$, either $p_iq_j=0$ or $\lambda_i=\beta_j$.
  It then follows from \eqref{finitespec} that $f$ is constant, so that $f(\lambda)=e^{\lambda \bar a}e^{-\lambda \bar b}=\mathbf 1$ for all $\lambda\in\mathbb C$. Differentiation finally gives $\bar a=\bar b$.
\end{proof}

\section{Geometry of $\varrho$}

To obtain the main result, Theorem~\ref{charf}, we first need to establish the formula in the case where $\sigma(a)$ and $\sigma(b)$ are finite sets.
As in the proof of Theorem~\ref{growth}, using Lemma~\ref{rep}, we can then write
$a=\sum_{i=1}^n\lambda_ip_i+r_a$ and $b=\sum_{j=1}^k\beta_jq_j+r_b$. Setting $\bar a=a-r_a$ and $\bar b=b-r_b$, we obtain the following:

\begin{lemma}\label{spec3} Suppose  $\sigma(a)$ and $\sigma(b)$ are finite. Then there exists a finite dimensional Banach space $X\subseteq A$ such that $\varrho(a,b)=r_\sigma(L_{\bar a}-R_{\bar b},\mathcal L(X))$.
\end{lemma}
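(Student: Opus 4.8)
The plan is to exploit the orthogonality of the two families of Riesz projections in order to diagonalize the commutator on a small invariant subspace. Writing $\bar a=\sum_{i=1}^n\lambda_ip_i$ and $\bar b=\sum_{j=1}^k\beta_jq_j$ as in the preamble, I first reduce the claim to the pair $\bar a,\bar b$: since $-r_a$ and $-r_b$ are quasinilpotent and commute with $a$ and $b$ respectively (Lemma~\ref{rep}), the property of $\varrho$ quoted in the Introduction gives $\varrho(a,b)=\varrho(\bar a,\bar b)$. Next I would set $X=\operatorname{span}\{p_iq_j:1\le i\le n,\ 1\le j\le k\}$, a finite dimensional, hence closed, subspace of $A$. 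Because $\mathbf 1=(\sum_ip_i)(\sum_jq_j)=\sum_{i,j}p_iq_j$, we have $\mathbf 1\in X$, and a direct computation using $p_ip_{i'}=\delta_{ii'}p_i$ and $q_jq_{j'}=\delta_{jj'}q_j$ shows
\[(L_{\bar a}-R_{\bar b})(p_iq_j)=\bar a\,p_iq_j-p_iq_j\,\bar b=(\lambda_i-\beta_j)\,p_iq_j;\]
thus $X$ is invariant under $L_{\bar a}-R_{\bar b}$ and every nonzero $p_iq_j$ is an eigenvector.

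Let $T$ denote the restriction $(L_{\bar a}-R_{\bar b})|_X\in\mathcal L(X)$. Since the eigenvectors $\{p_iq_j\neq0\}$ span $X$, the operator $T$ is diagonalizable with spectrum $\{\lambda_i-\beta_j:p_iq_j\neq0\}$, so that $r_\sigma(T)=M:=\max\{|\lambda_i-\beta_j|:p_iq_j\neq0\}$, which is precisely the right-hand side of the asserted identity. On the other hand, $\mathbf 1\in X$ forces $C_{\bar a,\bar b}^n\mathbf 1=T^n\mathbf 1$, so $\varrho(\bar a,\bar b)=\limsup_n\|T^n\mathbf 1\|^{1/n}$ is nothing but the local spectral radius of $T$ at $\mathbf 1$. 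The lemma therefore reduces to showing that this local spectral radius coincides with the global one, $M$.

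The bound $\varrho(\bar a,\bar b)\le M$ is immediate from $T^n\mathbf 1=\sum_{i,j}(\lambda_i-\beta_j)^np_iq_j$ and the triangle inequality. The hard part will be the reverse inequality: one must ensure that $\mathbf 1$ genuinely detects the dominant eigenvalue and is not annihilated there by cancellation among the $p_iq_j$. To this end I would group the sum as $T^n\mathbf 1=\sum_\mu\mu^nP_\mu$, indexed by the distinct eigenvalues $\mu$, where $P_\mu=\sum_{\lambda_i-\beta_j=\mu}p_iq_j$ is the component of $\mathbf 1$ in the $\mu$-eigenspace. The key observation is that each such $P_\mu$ is nonzero: choosing $i_0,j_0$ with $\lambda_{i_0}-\beta_{j_0}=\mu$ and $p_{i_0}q_{j_0}\neq0$, multiplication on the left by $p_{i_0}$ and on the right by $q_{j_0}$ collapses the sum, by orthogonality, to $p_{i_0}P_\mu q_{j_0}=p_{i_0}q_{j_0}\neq0$. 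Taking $\mu^*$ with $|\mu^*|=M$ and applying the bounded spectral projection $E$ of $T$ onto the $\mu^*$-eigenspace then yields $ET^n\mathbf 1=(\mu^*)^nP_{\mu^*}$, whence $\|T^n\mathbf 1\|\ge\|E\|^{-1}M^n\|P_{\mu^*}\|$ and $\limsup_n\|T^n\mathbf 1\|^{1/n}\ge M$. Combining the two inequalities gives $\varrho(a,b)=\varrho(\bar a,\bar b)=M=r_\sigma(T)$, as required.
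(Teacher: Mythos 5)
Your proof is correct, but it takes a genuinely different route from the paper's. The paper works on the slightly larger space spanned by $\{p_i^rq_j^t\}$ and never identifies the spectrum of the restricted commutator at this stage: it uses the identity $C^m_{\bar a,\bar b}(p_i\mathbf 1 q_j)=p_i\bigl(C^m_{\bar a,\bar b}\mathbf 1\bigr)q_j$ together with the equivalence of norms on a finite dimensional space to get $\|C^m_{\bar a,\bar b}\|_{\mathcal L(X)}\leq K\|C^m_{\bar a,\bar b}\mathbf 1\|$, so that the local spectral radius at $\mathbf 1$ coincides with the global one by a purely soft estimate; the explicit value $\sup\{|\lambda_i-\beta_j|:p_iq_j\not=0\}$ is only extracted afterwards, in Theorem~\ref{char}, via characters of the commutative algebra generated by the $L_{p_i}$ and $R_{q_j}$. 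You instead diagonalize $L_{\bar a}-R_{\bar b}$ on $\operatorname{span}\{p_iq_j\}$ outright, and your compression argument $p_{i_0}P_\mu q_{j_0}=p_{i_0}q_{j_0}\not=0$ is exactly the right device to rule out cancellation in the eigencomponents of $\mathbf 1$ (it is the same orthogonality collapse the paper performs with characters). The trade-off: your argument proves the lemma and Theorem~\ref{char} in one stroke, which is more efficient; the paper's version isolates a statement ($\varrho$ equals a genuine spectral radius on a finite dimensional space) that holds without knowing the spectrum of the restriction, and defers the geometry to the Gelfand theory. One small point worth making explicit in your write-up is that a nonzero product cannot carry two different eigenvalues, i.e.\ $p_iq_j=p_{i'}q_{j'}\not=0$ forces $(i,j)=(i',j')$ (multiply by $p_i$ on the left and $q_j$ on the right); this is what legitimizes reading off $\sigma(T)$ from the spanning set of eigenvectors.
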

\begin{proof} Let $X$ denote the normed space spanned by
the set \[Y=\{p^r_iq^t_j:i\in\{1,\dots,n\},j\in\{1,\dots,k\},r\in\{0,1\},t\in\{0,1\}\}.\] It is elementary that $L_{\bar a}$ and
 $R_{\bar b}$ belong to $\mathcal L(X)$.
Without loss of generality we may assume $Y$ constitutes a linearly independent set of vectors. Since $X$ has finite dimension there exist
$K_1,K_2>0$ such that if $x$ is a linear combination of elements in $Y$ with coefficients $\gamma_0,\dots,\gamma_s$ then
\[K_1(|\gamma_0|+\cdots+|\gamma_s|)\leq\|x\|\leq K_2(|\gamma_0|+\cdots+|\gamma_s|).\] Obviously we may take $K_2$ as
\[K_2=\sup\{\|p_i\|\,\|q_j\|+1:i\in\{1,\dots,n\},j\in\{1,\dots,k\}\}.\] So for $x\in X$ given by, say
\[x=\gamma_0\mathbf 1+\gamma_1p_1+\gamma_2q_1+\gamma_3p_1q_1+\cdots+\gamma_sp_nq_k\]
it follows that
\[C^m_{\bar a,\bar b}x=\gamma_0[C^m_{\bar a,\bar b}\mathbf 1]+\gamma_1p_1[C^m_{\bar a,\bar b}\mathbf 1]+\gamma_2[C^m_{\bar a,\bar b}\mathbf 1]q_1+\cdots+\gamma_sp_n[C^m_{\bar a,\bar b}\mathbf 1]q_k,\] and thus that
\begin{align*}
\|C^m_{\bar a,\bar b}x\|&\leq(|\gamma_0|+|\gamma_1|\|p_1\|+|\gamma_2|\|q_1\|+\cdots+|\gamma_s|\|p_n\|\|q_k\|)\|C^m_{\bar a,\bar b}\mathbf 1\|\\&\leq
K_2(|\gamma_0|+|\gamma_1|+|\gamma_2|+\cdots+|\gamma_s|)\|C^m_{\bar a,\bar b}\mathbf 1\|\\&\leq
{K_2}K_1^{-1}\|x\|\|C^m_{\bar a,\bar b}\mathbf 1\|.
\end{align*}
Taking the supremum over all $x$ of norm $1$ we see that
\[\|C^m_{\bar a,\bar b}\|\leq{K_2}K_1^{-1}\|C^m_{\bar a,\bar b}\mathbf 1\|\] holds for each $m$.
So it follows that
\[r_\sigma(L_{\bar a}-R_{\bar b},\mathcal L(X))=\limsup_{m}\|C^m_{\bar a,\bar b}\|^{1/m}\leq\limsup_{m}\|C^m_{\bar a,\bar b}\mathbf 1\|^{1/m}=\varrho(\bar a,\bar b).\]
On the other hand it follows trivially, from $\|C^m_{\bar a,\bar b}\mathbf 1\|\leq\|C^m_{\bar a,\bar b}\|$, that
$\varrho(\bar a,\bar b)\leq r_\sigma(L_{\bar a}-R_{\bar b},\mathcal L(X))$, and hence $\varrho(\bar a,\bar b)=r_\sigma(L_{\bar a}-R_{\bar b},\mathcal L(X))$. But of course $\varrho(\bar a,\bar b)=\varrho(a,b)$.
\end{proof}

\begin{theorem}\label{char} Suppose $\sigma(a)$ and $\sigma(b)$ are finite with $\sigma(a)=\{\lambda_1,\dots,\lambda_n\}$, $\sigma(b)=\{\beta_1,\dots,\beta_k\}$. If $\{p_1,\dots,p_n\}$ and $\{q_1,\dots,q_k\}$ are the corresponding Riesz projections then
\begin{equation}\label{rhogeo}
\varrho(a,b)=\sup\{|\lambda_i-\beta_j|:p_iq_j\not=0\}.
\end{equation}
\end{theorem}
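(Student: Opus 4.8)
The plan is to reduce the computation to an explicit diagonalization of the iterated commutator acting on $\mathbf 1$, exploiting only the orthogonality of the two families of Riesz projections. Exactly as in the preamble I would replace $a,b$ by $\bar a=\sum_{i=1}^n\lambda_ip_i$ and $\bar b=\sum_{j=1}^k\beta_jq_j$; since $r_a,r_b$ are quasinilpotents commuting with $a,b$, the invariance property $\varrho(a,b)=\varrho(\bar a,\bar b)$ recorded in the introduction lets me work with $\bar a,\bar b$ throughout. The key structural fact is the closed form
\[C^m_{\bar a,\bar b}\mathbf 1=\sum_{i,j}(\lambda_i-\beta_j)^m\,p_iq_j,\]
which I would obtain either by reading off the $\lambda^m/m!$ coefficient in the expansion \eqref{finitespec}, or directly from $\bar a^{m}=\sum_i\lambda_i^{m}p_i$, $\bar b^{m}=\sum_j\beta_j^{m}q_j$ together with the binomial expression for $C^m_{\bar a,\bar b}\mathbf 1$ and the orthogonality relations $p_sp_i=\delta_{si}p_i$, $q_tq_j=\delta_{tj}q_j$.

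Granting this formula, set $M=\sup\{|\lambda_i-\beta_j|:p_iq_j\not=0\}$, which is a maximum over a finite, nonempty set (nonempty because $\sum_{i,j}p_iq_j=\mathbf 1\not=0$). The upper bound $\varrho(\bar a,\bar b)\le M$ is routine: the triangle inequality gives $\|C^m_{\bar a,\bar b}\mathbf 1\|\le\big(\sum_{p_iq_j\not=0}\|p_iq_j\|\big)M^m$, and since the bracketed constant is independent of $m$ its $m$-th root tends to $1$, so $\limsup_m\|C^m_{\bar a,\bar b}\mathbf 1\|^{1/m}\le M$.

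The main obstacle is the reverse inequality, and this is where the condition $p_iq_j\not=0$ must be used sharply rather than merely as an index restriction. Assuming $M>0$, I would fix a pair $(i_0,j_0)$ attaining the supremum with $p_{i_0}q_{j_0}\not=0$, and compress the sum on the left by $p_{i_0}$ and on the right by $q_{j_0}$. Orthogonality collapses every term but one, yielding $p_{i_0}\big(C^m_{\bar a,\bar b}\mathbf 1\big)q_{j_0}=(\lambda_{i_0}-\beta_{j_0})^m\,p_{i_0}q_{j_0}$, whence submultiplicativity gives $\|C^m_{\bar a,\bar b}\mathbf 1\|\ge\|p_{i_0}\|^{-1}\|q_{j_0}\|^{-1}\|p_{i_0}q_{j_0}\|\,M^m$. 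Taking $m$-th roots and passing to the limit superior produces $\varrho(\bar a,\bar b)\ge M$; the case $M=0$ is trivial since then $C^m_{\bar a,\bar b}\mathbf 1=0$ for $m\ge1$. Combining the two bounds gives \eqref{rhogeo}.

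An alternative I would keep in reserve routes through the freshly proved Lemma~\ref{spec3}: writing $L_{\bar a}-R_{\bar b}=\sum_{i,j}(\lambda_i-\beta_j)L_{p_i}R_{q_j}$ exhibits this operator on $X$ as a combination of the mutually orthogonal idempotents $L_{p_i}R_{q_j}$, which sum to the identity of $\mathcal L(X)$, so that its spectrum is $\{\lambda_i-\beta_j:L_{p_i}R_{q_j}|_X\not=0\}$. Here the only delicate point is to verify $L_{p_i}R_{q_j}|_X\not=0\iff p_iq_j\not=0$, which follows by testing the idempotent on the spanning vectors $p^r_sq^t_u$ and again invoking orthogonality; then $r_\sigma(L_{\bar a}-R_{\bar b},\mathcal L(X))=M$ and Lemma~\ref{spec3} closes the argument. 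I expect the direct route above to be the cleaner one to write out in full.
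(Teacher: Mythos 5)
Your primary argument is correct, but it takes a genuinely different route from the paper's. The paper proves the formula by first invoking Lemma~\ref{spec3} to identify $\varrho(a,b)$ with the spectral radius $r_\sigma(L_{\bar a}-R_{\bar b},\mathcal L(X))$ on a finite-dimensional space, and then computes that spectral radius by Gelfand theory: it passes to the commutative unital algebra generated by the $L_{p_i}$ and $R_{q_j}$ and shows, via the characters and the orthogonality of each family of idempotents, that the spectrum of $\sum_i\lambda_iL_{p_i}-\sum_j\beta_jR_{q_j}$ is exactly $\{\lambda_i-\beta_j:p_iq_j\not=0\}$. Your route instead works directly with the closed form $C^m_{\bar a,\bar b}\mathbf 1=\sum_{i,j}(\lambda_i-\beta_j)^m p_iq_j$ (which is correct, by the binomial expansion and $p_sp_i=\delta_{si}p_i$, $q_tq_j=\delta_{tj}q_j$), obtaining the upper bound from the triangle inequality and the lower bound by the compression $p_{i_0}\bigl(C^m_{\bar a,\bar b}\mathbf 1\bigr)q_{j_0}=(\lambda_{i_0}-\beta_{j_0})^m p_{i_0}q_{j_0}$ together with submultiplicativity; all the auxiliary points you need (nonemptiness of the index set via $\sum_{i,j}p_iq_j=\mathbf 1$, the reduction $\varrho(a,b)=\varrho(\bar a,\bar b)$ via the quoted quasinilpotent-perturbation lemma, and the degenerate case $M=0$) are handled. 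What each approach buys: yours is more elementary and self-contained, needing neither Lemma~\ref{spec3} nor the character space, and it isolates exactly where the hypothesis $p_iq_j\not=0$ enters (in the compression step); the paper's approach is structurally cleaner in that it exhibits $\varrho(a,b)$ as an honest spectral radius in a finite-dimensional operator algebra and reads the answer off the maximal ideal space. Your ``reserve'' alternative is essentially the paper's proof, including the one delicate verification that $L_{p_i}R_{q_j}|_X\not=0$ if and only if $p_iq_j\not=0$.
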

\begin{proof} By Lemma~\ref{spec3} we have that
\begin{equation}\label{rhor}
\varrho(a,b)=r_\sigma\left(\sum_{i=1}^n\lambda_iL_{p_i}-\sum_{i=1}^k\beta_iR_{q_i},\mathcal L(X)\right)
\end{equation}
The preceding formula remains valid if we scale down to the commutative unital subalgebra generated by the $L_{p_i}$ and the $R_{q_i}$. Notice that $\sum_iL_{p_i}=I$, and $\sum_iR_{q_i}=I$. From this, together with the fact that the $L_{p_i}$ are mutually orthogonal, and the $R_{q_i}$ are mutually orthogonal, we now have the following: Corresponding to each $\chi$ belonging to the character space of the algebra there exists a unique pair, say $L_{p_t}$ and $R_{q_s}$, such that $\chi(L_{p_t})=1=\chi(R_{q_s})$ and $\chi(L_{p_i})=0=\chi(R_{q_j})$ whenever $i\not=t, j\not=s$. Conversely, if the product $p_tq_s\not=0$, then the projection $L_{p_t}R_{q_s}\not=0$ and hence there is $\chi$ such that
$\chi(L_{p_t}R_{q_s})=1$. So, for each of the two projections, we have  $\chi(L_{p_t})=1=\chi(R_{q_s})$. With these observations \eqref{rhor} gives the formula \eqref{rhogeo}.
\end{proof}

It is not obvious from \eqref{varrho} that $\varrho$ is not symmetric (see the comments in \cite[p.251]{l+n} regarding this matter). However, Theorem~\ref{char} prescribes the construction of $a,b$ such that $\varrho(a,b)\not=\varrho(b,a)$; the formula \eqref{rhogeo} suggests that one should look for Riesz projections, say $p$ and $q$, such that $pq\not=0$ but $qp=0$:

\begin{example}\label{exa} Let $A$ be the free algebra generated by the alphabet
$\{\mathbf 1,x_1,x_2\}$, subject to the conditions $x_1^2=x_1$, $x_2^2=x_2$, $x_1x_2=0$ and $x_2x_1\not=0$. $A$ is a Banach algebra with
\[\|\alpha_0\mathbf 1+\alpha_1x_1+\alpha_2x_2+\alpha_3x_2x_1\|=\sum\limits_j|\alpha_j|.\]
Now take $a=\frac{1}{2}x_1$ and $b=-\frac{1}{2}x_2$. Then \[C^n_{a,b}\mathbf 1=\frac{1}{2^n}(x_1+x_2)\Rightarrow\|C^n_{a,b}\mathbf 1\|=\frac{1}{2^{n-1}}\Rightarrow\varrho(a,b)=\frac{1}{2}.\]
On the other hand
\begin{align*}
C^n_{b,a}\mathbf 1&=\left(-\frac{1}{2}\right)^n\left[{n \choose 0}x_2+{n \choose 1}x_2x_1+\cdots+{n \choose {n-1}}x_2x_1+{n \choose n}x_1\right]\\&\Rightarrow
\|C^n_{b,a}\mathbf 1\|=\frac{1}{2^n}\sum_{j=0}^n{n \choose j}=1\Rightarrow\varrho(b,a)=1.
\end{align*}
\end{example}
For a more concrete exposition, notice that $A$ in Example~\ref{exa} is isomorphic to a four-dimensional subalgebra of $M_3(\mathbb C)$, the algebra of $3\times 3$ complex matrices.

\begin{theorem}\label{char2} Suppose $\sigma^\prime(a)$ and $\sigma^\prime(b)$ are discrete sets which cluster at $0\in\mathbb C$, if anywhere.
   If $\sigma^\prime(a)=\{\lambda_1,\lambda_2,\dots\}$ and $\sigma^\prime(b)=\{\beta_1,\beta_2,\dots\}$ denote the nonzero spectral points of $a$ and $b$, and if $\{p_1,p_2,\dots\}$ and $\{q_1,q_2\dots\}$ are the corresponding Riesz projections, then $\varrho$ takes at least one of the following values:
\begin{itemize}
\item[(i)]{ $\varrho(a,b)=\sup \{|\lambda_i-\beta_j|:p_iq_j\not=0\}$, or }
\item[(ii)]{ $\varrho(a,b)=|\lambda_i|$ for some $i\in\mathbb N$, or   }
\item[(iii)]{ $\varrho(a,b)=|\beta_i|$ for some $i\in\mathbb N$. }
 \end{itemize}
Moreover, $\varrho(a,b)=0$ if and only if
 the spectra and the corresponding Riesz projections of $a$ and $b$ coincide.
\end{theorem}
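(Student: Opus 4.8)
The plan is to reduce the stated infinite-spectrum formula to the finite-spectrum identity of Theorem~\ref{char} by truncating each spectrum through a Riesz projection, to control the error committed by this truncation by means of a triangle inequality for $\varrho$, and then to pass to the limit. The enabling tool is the inequality $\varrho(x,z)\le\varrho(x,y)+\varrho(y,z)$. To obtain it, attach to each ordered pair the entire function $F_{x,y}(\lambda)=e^{\lambda x}e^{-\lambda y}$ and observe that its exponential type $\limsup_r r^{-1}\log M_{F_{x,y}}(r)$ equals $\varrho(x,y)$; this is precisely the identity $\tau_f=\varrho(a,b)$ recorded in the Introduction, together with the observation that both quantities vanish when the order drops below $1$. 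Since $F_{x,y}(\lambda)F_{y,z}(\lambda)=F_{x,z}(\lambda)$, submultiplicativity gives $M_{F_{x,z}}(r)\le M_{F_{x,y}}(r)M_{F_{y,z}}(r)$, and applying $\limsup_r r^{-1}\log(\cdot)$ yields the inequality (the middle argument must be kept fixed, which is exactly what the truncation needs).

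For $\epsilon>0$ set $P_\epsilon=\sum_{|\lambda_i|\ge\epsilon}p_i$ and $Q_\epsilon=\sum_{|\beta_j|\ge\epsilon}q_j$; these are genuine Riesz projections since $\{\lambda:|\lambda|\ge\epsilon\}$ isolates a finite clopen part of each spectrum. Put $a_\epsilon=aP_\epsilon$ and $b_\epsilon=bQ_\epsilon$. Then $a_\epsilon$ and $b_\epsilon$ have the finite spectra $\{0\}\cup\{\lambda_i:|\lambda_i|\ge\epsilon\}$ and $\{0\}\cup\{\beta_j:|\beta_j|\ge\epsilon\}$, and a contour computation identifies their Riesz projections as $p(\lambda_i,a_\epsilon)=p_i$, $p(0,a_\epsilon)=\mathbf 1-P_\epsilon$, and symmetrically for $b_\epsilon$. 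Because $a$ commutes with $P_\epsilon$, the elements $a$ and $a_\epsilon$ commute, so $\varrho(a,a_\epsilon)=r_\sigma(a(\mathbf 1-P_\epsilon))=\sup\{|\lambda_i|:|\lambda_i|<\epsilon\}<\epsilon$, and likewise $\varrho(b,b_\epsilon)<\epsilon$. Two applications of the triangle inequality in each direction (with middle arguments $a_\epsilon,b_\epsilon$ one way and $a,b$ the other) then give $|\varrho(a,b)-\varrho(a_\epsilon,b_\epsilon)|\le 2\epsilon$, hence $\varrho(a,b)=\lim_{\epsilon\to0}\varrho(a_\epsilon,b_\epsilon)$.

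Applying Theorem~\ref{char} to the finite pair $(a_\epsilon,b_\epsilon)$ and sorting the supremum by whether each spectral value is nonzero or $0$, one gets $\varrho(a_\epsilon,b_\epsilon)=\max\{S_1(\epsilon),S_2(\epsilon),S_3(\epsilon)\}$, where
\[
S_1(\epsilon)=\sup\{|\lambda_i-\beta_j|:|\lambda_i|,|\beta_j|\ge\epsilon,\ p_iq_j\ne0\},\quad S_2(\epsilon)=\sup\{|\lambda_i|:|\lambda_i|\ge\epsilon,\ p_i(\mathbf 1-Q_\epsilon)\ne0\},
\]
and $S_3(\epsilon)=\sup\{|\beta_j|:|\beta_j|\ge\epsilon,\ (\mathbf 1-P_\epsilon)q_j\ne0\}$ (the $0$--$0$ term contributes $0$). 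Letting $\epsilon\to0$, the quantity $S_1(\epsilon)$ increases to $\sup\{|\lambda_i-\beta_j|:p_iq_j\ne0\}=:S_1$ while staying $\le\varrho(a,b)+2\epsilon$, so $S_1\le\varrho(a,b)$. A pigeonhole over a sequence $\epsilon_m\downarrow0$ selects an index $k$ with $S_k(\epsilon_m)\to\varrho(a,b)$: if $k=1$ then $\varrho(a,b)=S_1$, which is case (i); if $k=2$ and $\varrho(a,b)>0$, each $S_2(\epsilon_m)$ is attained at some $\lambda_{i(m)}$ with $|\lambda_{i(m)}|\to\varrho(a,b)>0$, and since the $\lambda_i$ cluster only at $0$ the indices $i(m)$ are eventually constant, giving $\varrho(a,b)=|\lambda_{i_0}|$, which is case (ii); the case $k=3$ gives (iii) symmetrically. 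Finally, if $\varrho(a,b)=0$ then $S_1=0=\varrho(a,b)$, so (i) holds.

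For the ``moreover'' the reverse implication is immediate: if $\sigma^\prime(a)=\sigma^\prime(b)$ with $p_i=q_i$, then $P_\epsilon=Q_\epsilon$, the only surviving $S_1$-pairs are the diagonal ones of value $0$, and $p_i(\mathbf 1-Q_\epsilon)=p_i-p_i=0$ when $|\lambda_i|\ge\epsilon$, so all three supremums vanish and $\varrho(a,b)=0$. The forward implication is where the work lies: assuming $\varrho(a,b)=0$ gives $\varrho(a_\epsilon,b_\epsilon)\le2\epsilon$ for every $\epsilon$; for a fixed nonzero $\lambda_i$ and $\epsilon<|\lambda_i|/2$, the bound $S_2(\epsilon)\le2\epsilon<|\lambda_i|$ forces $p_i(\mathbf 1-Q_\epsilon)=0$, so $p_i=\sum_{|\beta_j|\ge\epsilon}p_iq_j$, while $S_1(\epsilon)\le2\epsilon$ confines every contributing $\beta_j$ to within $2\epsilon$ of $\lambda_i$; as $\epsilon\to0$ the discreteness of $\sigma^\prime(b)$ away from $0$ leaves exactly one such $\beta_j$, necessarily equal to $\lambda_i$, with $p_iq_j=p_i$, and the symmetric use of $S_3$ gives $p_iq_j=q_j$, whence $p_i=q_j$ and $\lambda_i=\beta_j$. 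Running the same argument from each nonzero $\beta_j$ supplies the reverse inclusion, so the spectra and the matched projections coincide. I expect the main obstacle to be precisely this forward direction: the triangle inequality and the Riesz-projection identifications are the structural tools that legitimize the truncation, but extracting an \emph{exact} coincidence of spectra and projections from the mere vanishing of the asymmetric $\varrho$ is delicate, and it is the discreteness of the spectra away from $0$ that pins each $\lambda_i$ to a unique matching $\beta_j$.
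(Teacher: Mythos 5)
Your proposal is correct and follows essentially the same strategy as the paper's proof: truncate to finite spectra, control the truncation error via the triangle inequality for $\varrho$ together with $\varrho(x,y)=r_\sigma(x-y)$ for commuting elements, apply Theorem~\ref{char} to the truncations, and pass to the limit with a pigeonhole/cluster-point argument for the trichotomy and for the ``moreover'' part. The only differences are cosmetic: you truncate by spectral magnitude ($a_\epsilon=aP_\epsilon$) where the paper truncates by index ($a_n=\sum_{i\le n}\lambda_ip_i$, which also discards the quasinilpotent parts), and you supply a self-contained derivation of the triangle inequality via exponential type, which the paper simply invokes.
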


\begin{proof} We prove the result where both $\sigma(a)$ and $\sigma(b)$ are infinite sets; the other cases follow similarly: For each $n\in\mathbb N$ let  $a_n=\sum_{i=1}^n\lambda_ip_i$ and $b_n=\sum_{i=1}^n\beta_iq_i$, and put $p_{0,n}=\mathbf 1-\sum_{i=1}^np_i$, $q_{0,n}=\mathbf 1-\sum_{i=1}^nq_i$. As $\sigma(a)$, $\sigma(b)$ are assumed to be infinite, we must have $p_{0,n}\not=0$, $q_{0,n}\not=0$. Note that $\sigma(a_n)=\{\lambda_0,\lambda_1,\dots,\lambda_n\}$ with $\lambda_0=0$ and similarly $\sigma(b_n)=\{\beta_0,\beta_1,\dots,\beta_n\}$ with $\beta_0=0$ (because $a_np_{0,n}=0$ and $b_nq_{0,n}=0$ respectively).  Further, for each $n$, let $\Gamma_{a,n}$ be a simple closed curve, disjoint from $\sigma(a)$, and surrounding only the subset $\{\lambda_{n+1},\lambda_{n+2},\dots\}\cup\{0\}\subset\sigma(a)$. If we notice that, for each $n$,
 \[a=\sum_{i=1}^nap_i+\frac{1}{2\pi i}\int_{\Gamma_{a,n}}\lambda(\lambda\mathbf 1-a)^{-1}\,d\lambda,\] and that $a_n$ commutes with $a$, then it follows that
 $\sigma(a-a_n)\subseteq\{\lambda_{n+1},\lambda_{n+2},\dots\}\cup\{0\}$, and hence $r_{\sigma}(a-a_n)\rightarrow 0$ as $n\rightarrow\infty$. In the same way it follows that  $r_{\sigma}(b-b_n)\rightarrow 0$. Using the triangle inequality for $\varrho$, together with the fact that $\varrho(x,y)=r_{\sigma}(x-y)$ whenever $x$ and $y$ commute, we then obtain
 \[|\varrho(a_n,b_n)-\varrho(a,b)|\leq r_{\sigma}(a-a_n)+r_{\sigma}(b-b_n),\] whence it follows that $\varrho(a,b)=\lim_n\varrho(a_n,b_n)$.
 We now want to use Theorem~\ref{char} to calculate $\varrho(a_n,b_n)$; this requires knowledge of the Riesz projections $p(\lambda_i,a_n)$ and $p(\beta_i,b_n)$ for $i=0,1,\dots, n$:  Observe, for $\lambda\not\in\sigma(a_n)$, that
 \[(\lambda\mathbf 1-a_n)^{-1}=\frac{\mathbf 1}{\lambda}+\sum_{i=1}^n\frac{\lambda_i}{\lambda(\lambda-\lambda_i)}p_i.\] So it follows from the
 Cauchy Integral Formula, and the Cauchy Integral Theorem, that for each $0<i\leq n$, $p(\lambda_i,a_n)=p_i$. A similar argument yields
 $p(\beta_i,b_n)=q_i$ when $0<i\leq n$. It is then obvious that $p(\lambda_0,a_n)=p_{0,n}$ and $p(\beta_0,b_n)=q_{0,n}$.
 Define, for each $n\in\mathbb N$,
\[U_{1,n}=\{|\lambda_i-\beta_j|:p_iq_j\not=0,\,  i,j=1,\dots,n\},\]
\[U_{2,n}=\{|\lambda_i|:p_iq_{0,n}\not=0,\, i=1,\dots,n\},\]
\[U_{3,n}=\{|\beta_i|:p_{0,n}q_i\not=0,\, i=1,\dots,n\},\]
and $U_n=\cup_{j=1}^3U_{j,n}$. If we keep $n$ fixed for the moment, writing  $p_0=p_{0,n}$, $q_0=q_{0,n}$, then, by Theorem~\ref{char}, we obtain
\begin{equation}\label{rhon}
\varrho(a_n,b_n)=\sup\{|\lambda_i-\beta_j|:p_iq_j\not=0, i,j=0,1\dots,n\}=\sup U_n.
 \end{equation}
Notice that $U_n\not=\emptyset$, because if $U_{1,n}=\emptyset$, then, for instance, $p_1q_j=0$ for $j=1,\dots,n$ so that $p_1q_{0,n}=p_1\not=0$ whence $|\lambda_1|\in U_{2,n}\subseteq U_n$. Having established \eqref{rhon}, we are now in a position to derive the conclusion of Theorem~\ref{char2}.
 We shall first prove the statement that $\varrho(a,b)=0$ if and only if  the spectra and the corresponding Riesz projections of $a$ and $b$ coincide: For the reverse implication notice that we can take $a_n=b_n$ for each $n\in\mathbb N$. Thus $\varrho(a,b)=\lim_n\varrho(a_n,b_n)=0$. Suppose, conversely, that $\varrho(a,b)=0$. First let us remark that for each index $i_*$ we can find an index $j_*$ such that $p_{i_*}q_{j_*}\not=0$; if this was not true, i.e. $p_{i_*}q_j=0$ for all $j$, then we may infer that $0\not=p_{i_*}=p_{i_*}q_{0,n}$ for all $n\geq i_*$. But this means that $|\lambda_{i_*}|\in U_{2,n}\subseteq U_n$ for all $n\geq i_*$ which in turn implies $\varrho(a,b)=\lim_n\sup U_n\geq |\lambda_{i_*}|>0$, contradicting $\varrho(a,b)=0$. We therefore have the implication:
\begin{equation}\label{nonempty}
 \varrho(a,b)=0\Rightarrow W:=\{|\lambda_i-\beta_j|:p_iq_j\not=0\}\not=\emptyset.
  \end{equation}
  We proceed to prove $\sigma(a)=\sigma(b)$. Since the spectra of both $a$ and $b$ are infinite, the hypothesis implies $0\in \sigma(a)\cap\sigma(b)$. For a contradiction, suppose that $0\not=\lambda_{i_*}\in\sigma(a)$ but $\lambda_{i_*}\notin\sigma(b)$. Then, as in the paragraph preceding
\eqref{nonempty}, we can find an index $j_*$ such that $p_{i_*}q_{j_*}\not=0$. If $n\geq\max\{i_*,j_*\}$ is arbitrary, then $|\lambda_{i_*}-\beta_{j_*}|\in U_{1,n}\subseteq U_n$ from which
\[\varrho(a,b)=\lim_n\sup U_n\geq|\lambda_{i_*}-\beta_{j_*}|\geq\dist(\lambda_{i_*},\sigma(b))>0\]
giving the required contradiction. Therefore $\sigma(a)\subseteq\sigma(b)$. Similarly $\sigma(b)\subseteq\sigma(a)$, and we have   $\sigma(a)=\sigma(b)$.
 It remains to show that the Riesz projections,
$p(\lambda_{i_*},a)=:p_{i_*}$ and $p(\lambda_{i_*},b)=:q_{i_*}$, corresponding to a common nonzero spectral value $\lambda_{i_*}\in\sigma(a)=\sigma(b)$, are in fact equal: First observe that $\sup W=0$; indeed if for some indices $i_*,j_*$ we have
$0\not=|\lambda_{i_*}-\lambda_{j_*}|\in W$, then  $|\lambda_{i_*}-\lambda_{j_*}|\in U_{1,n}$ for all $n\geq\max\{i_*,j_*\}$, and hence, as before,
$\varrho(a,b)>0$ which is absurd. If we fix an index $i_*$, then $p_{i_*}q_j=0$ whenever $j\not=i_*$ because otherwise $p_{i_*}q_j\not=0$ implies
 $|\lambda_{i_*}-\lambda_j|\in W$, forcing $\lambda_{i_*}=\lambda_j$, which is possible only if $j=i_*$ (as the points in the spectrum are distinct).
 Therefore
\begin{equation*}
p_{i_*}-p_{i_*}q_{i_*}=p_{i_*}q_{0,n} =p_{i_*}\left(\mathbf 1-\sum_{j=1}^nq_j\right)\mbox{ for all }n\geq i_*.
\end{equation*}
Now if $p_{i_*}\not=p_{i_*}q_{i_*}$, then $\varrho(a_n,b_n)\geq|\lambda_{i_*}|$ for all $n\geq i_*$, which again leads to
$\varrho(a,b)\geq|\lambda_{i_*}|>0$. So we conclude that $p_{i_*}=p_{i_*}q_{i_*}$.
A similar argument, using the sets $U_{3,n}$ instead, gives $q_{i_*}=p_{i_*}q_{i_*}$, and thus $p_{i_*}=q_{i_*}$. We have now shown that
$\varrho(a,b)=0$ if and only if the spectra and the corresponding Riesz projections of $a$ and $b$ coincide.

For the remaining part of the statement: If $\varrho(a,b)=0$, then \eqref{nonempty} says $W\not=\emptyset$, and, as we have shown,
$\sup W=0$; hence (i) is valid. Suppose that $\varrho(a,b)>0$ and that $\sup W<\lim_n\sup U_n$ (if $W=\emptyset$ we let $\sup W=0$). If we set
$\tau_n=\sup(U_{2,n}\cup U_{3,n})$, then $\lim_n\sup U_n=\lim_n\tau_n$ whence it follows that there exists $N\in\mathbb N$ such that
$\tau_n>\sup W$ for all $n\geq N$. In particular, we can build either a sequence $\left(\lambda_{i,{n_k}}\right)$ whose members belong to $\sigma^\prime(a)$, or a sequence $\left(\beta_{j,{n_k}}\right)$ whose members belong to $\sigma^\prime(b)$, such that $\left|\lambda_{i,{n_k}}\right|=\tau_{n_k}$ or $\left|\beta_{j,{n_k}}\right|=\tau_{n_k}$, and $\lim_k\left|\lambda_{i,{n_k}}\right|=\lim_n\sup U_n$ or $\lim_k\left|\beta_{j,{n_k}}\right|=\lim_n\sup U_n$. To avoid trivial misunderstanding, the notation indicates that these sequences are not subsequences of, respectively, $(\lambda_i)$ and $(\beta_j)$ but, rather, sequences constructed by extracting individual members of the sets $\sigma^\prime(a)$ and $\sigma^\prime(b)$ (i.e. repetition of terms may occur). Anyhow, if we assume the existence of the sequence $\left(\lambda_{i,{n_k}}\right)$ satisfying the aforementioned properties, then, since $\lim_n\sup U_n>0$, it follows that the sequence $(|\lambda_{i,{n_k}}|)$ must eventually be constant (because the spectrum of $a$ clusters only at $0\in\sigma(a)$). This means there exists
an index, $i_*$, such that $\lim_n\sup U_n=|\lambda_{i_*}|$ and hence that $\varrho(a,b)=|\lambda_{i_*}|$, so that (ii) holds. If the sequence
$\left(\lambda_{i,{n_k}}\right)$ cannot be found, then a similar argument, with the sequence $\left(\beta_{j,{n_k}}\right)$, shows that (iii) holds.
\end{proof}

For elements $a,b\in A$ satisfying the hypothesis of Theorem~\ref{char2} it follows that $\varrho(a,b)=0\Leftrightarrow\varrho(b,a)=0$ which simplifies the requirement for quasinilpotent equivalence. The proof of Theorem~\ref{char2} also establishes a formula for $\varrho$: Let us assume the hypothesis of Theorem~\ref{char2}, where both $\sigma(a)$ and $\sigma(b)$ are infinite sets. Define, as in the proof of Theorem~\ref{char2},
\[W:=\{|\lambda_i-\beta_j|:p_iq_j\not=0\}.\]
If $W=\emptyset$, then the proof of Theorem~\ref{char2} shows that, for each $n$, we have $\varrho(a_n,b_n)=\sup\{r_\sigma(a_n),r_\sigma(b_n)\}$. Therefore \[\varrho(a,b)=\lim_n\varrho(a_n,b_n)=\lim_n\sup\{r_\sigma(a_n),r_\sigma(b_n)\}=\sup\{r_\sigma(a),r_\sigma(b)\}.\]
Suppose now $W\not=\emptyset$. If for some index $k$ we have $|\lambda_k|>\sup W$ then, since $\lim_j\beta_j=0$, there exists $N>0$ such that $p_kq_j=0$ for all $j\geq N$; if this was not true then some subsequence, say $(q_{j_m})$, of  $(q_j)$ satisfies $p_kq_{j_m}\not=0$ for each $m$. But then, by definition, $|\lambda_k-\beta_{j_m}|\in W$ for each $m$. Letting $m\rightarrow\infty$, so that $\beta_{j_m}\rightarrow0$, we see that $\sup W\geq |\lambda_k|$ contradicting the assumption. So for any index $k$ satisfying $|\lambda_k|>\sup W$ we have that $\lim_n\sum_{j=1}^np_kq_j=:\sum_{j=1}^\infty p_kq_j$ exists in $A$. Moreover, in the same way we can prove that if $|\beta_k|>\sup W$ then $\sum_{j=1}^\infty p_jq_k$ exists in $A$. Thus, if $W\not=\emptyset$,
we may define:
\begin{gather*}
W_{\lambda}:=\left\{|\lambda_k|:|\lambda_k|>\sup W\ \mbox{\rm and } \sum_{j=1}^\infty p_kq_j\not=p_k\right\},\\
W_{\beta}:=\left\{|\beta_k|:|\beta_k|>\sup W\ \mbox{\rm and } \sum_{j=1}^\infty p_jq_k\not=q_k\right\}.
\end{gather*}
The arguments leading to Theorem~\ref{char2} now proves the following formula:
\begin{theorem}[global spectral formula for $\varrho$]\label{charf} With the hypothesis of Theorem~\ref{char2} (where both $\sigma(a)$ and $\sigma(b)$ are infinite sets), we have
 \begin{displaymath}
\varrho(a,b)=\left\{\begin{array}{cc}\sup W\cup W_{\lambda}\cup W_{\beta} & \mbox{ if }W\not=\emptyset\\
\sup\{r_\sigma(a),r_\sigma(b)\} & \mbox{ if }W=\emptyset.\end{array}\right.
\end{displaymath}
\end{theorem}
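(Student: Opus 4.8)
The plan is to build directly on the machinery already assembled for Theorem~\ref{char2}, so that the whole argument reduces to evaluating the limit $\varrho(a,b)=\lim_n\sup U_n$ established there, where $U_n=U_{1,n}\cup U_{2,n}\cup U_{3,n}$ and $\varrho(a_n,b_n)=\sup U_n$ by \eqref{rhon}. The case $W=\emptyset$ has in effect been disposed of in the discussion preceding the statement: there $p_iq_j=0$ for all nonzero indices, so $U_{1,n}=\emptyset$, while $p_iq_{0,n}=p_i\not=0$ forces $\sup U_{2,n}=\max\{|\lambda_1|,\dots,|\lambda_n|\}=r_\sigma(a_n)$ and likewise $\sup U_{3,n}=r_\sigma(b_n)$; letting $n\to\infty$ and using $r_\sigma(a_n)\to r_\sigma(a)$ and $r_\sigma(b_n)\to r_\sigma(b)$ yields the second branch of the formula. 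Hence I concentrate on the case $W\not=\emptyset$, set $S:=\sup(W\cup W_\lambda\cup W_\beta)$, and aim to show $\lim_n\sup U_n=S$ by a two-sided estimate.

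For the lower bound $\lim_n\sup U_n\geq S$, I would verify that every member of $W$, $W_\lambda$, and $W_\beta$ lies in $U_n$ for all large $n$. An element $|\lambda_i-\beta_j|\in W$ (so $p_iq_j\not=0$) lies in $U_{1,n}$ as soon as $n\geq\max\{i,j\}$. For $|\lambda_k|\in W_\lambda$ I invoke the stabilization already recorded before the statement: since $|\lambda_k|>\sup W$ and $\beta_j\to0$, there is $N_k$ with $p_kq_j=0$ for $j\geq N_k$, so $p_kq_{0,n}=p_k-\sum_{j=1}^\infty p_kq_j\not=0$ for $n\geq\max\{k,N_k\}$, which places $|\lambda_k|$ in $U_{2,n}$ eventually; the symmetric computation with $U_{3,n}$ handles $W_\beta$. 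Each such element is therefore $\leq\sup U_n$ for all large $n$, hence $\leq\lim_n\sup U_n$, and taking the supremum over all of them gives $\lim_n\sup U_n\geq S$.

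For the upper bound I would bound $\limsup_n\sup U_n$, which equals the limit known to exist. Since $\sup U_n=\max\{\sup U_{1,n},\sup U_{2,n},\sup U_{3,n}\}$ and $\limsup$ commutes with finite maxima, it suffices to bound each piece. Clearly $\sup U_{1,n}\leq\sup W\leq S$. For $U_{2,n}$, choose for each $n$ an index $i_n\leq n$ attaining $\sup U_{2,n}=|\lambda_{i_n}|$ with $p_{i_n}q_{0,n}\not=0$; if $\ell:=\limsup_n|\lambda_{i_n}|>0$ then, because $\sigma^\prime(a)$ clusters only at $0$, only finitely many moduli exceed $\ell/2$, so along a subsequence $i_{n_k}$ is constant, say equal to $i_*$, with $|\lambda_{i_*}|=\ell$ and $p_{i_*}q_{0,n_k}\not=0$ for all $k$. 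If $\ell\leq\sup W$ then $\ell\leq S$ directly; if $\ell>\sup W$ then the stabilization argument gives $\sum_{j=1}^\infty p_{i_*}q_j\not=p_{i_*}$, so $|\lambda_{i_*}|\in W_\lambda$ and again $\ell\leq S$. Thus $\limsup_n\sup U_{2,n}\leq S$, and symmetrically $\limsup_n\sup U_{3,n}\leq S$, whence $\lim_n\sup U_n\leq S$. Combining the two bounds gives $\varrho(a,b)=S$.

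The main obstacle is the upper bound, and specifically the subcase $\sup W=0$: here infinitely many $|\lambda_i|$ exceed $\sup W$, so one cannot simply invoke a finite collection of ``large'' spectral values and extract a uniform threshold $N$. The subsequence extraction in the previous paragraph---isolating a constant index $i_*$ from the discreteness of $\sigma^\prime(a)$ away from $0$---is precisely what rescues the argument, and some care is needed to match the \emph{eventual} membership of $|\lambda_{i_*}|$ in $U_{2,n}$ with the defining condition $\sum_{j}p_{i_*}q_j\not=p_{i_*}$ of $W_\lambda$, rather than with a partial-sum artefact visible only at finite $n$.
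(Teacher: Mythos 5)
Your argument is correct and follows essentially the same route as the paper, which proves Theorem~\ref{charf} by appealing to the identity $\varrho(a,b)=\lim_n\sup U_n$ and the stabilization of $\sum_j p_kq_j$ for $|\lambda_k|>\sup W$ established in and around the proof of Theorem~\ref{char2}. Your two-sided estimate, including the subsequence extraction of a constant index $i_*$ via the discreteness of $\sigma^\prime(a)$ away from $0$, simply writes out in detail the steps the paper leaves implicit.
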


We may remark that if both $\sigma(a)$ and $\sigma(b)$ are finite sets then the formula in Theorem~\ref{char} applies. If one spectrum is infinite
($\sigma(a)$), and the other finite ($\sigma(b)$), then one can easily adjust the formula in Theorem~\ref{char2}: Specifically, if $\sigma(b)$ is finite, then every spectral value has a corresponding Riesz projection whence the set $W_\lambda$ becomes redundant with its role being taken over by an adjusted version of the set $W$ (where $q_0$ is the Riesz projection corresponding to $\beta_0=0$). To deal with the cluster point $0\in\sigma(a)$ one needs a limiting process, as in the proof of Theorem~\ref{char2}, which necessitates the definition of $W_\beta$.

To illustrate the implementation as well as the practical value of Theorem~\ref{charf} consider the following:

\begin{example} With the usual understanding, let $X$ be the Banach space $L^1[1,\infty)$. Given $f\in X$ define noncommuting $T,S\in\mathcal L(X)$ by \[(Tf)(t)=\frac{f(t)}{k}\ \ \ \mbox{ if }t\in[k,k+1),\ k\in\mathbb N\] and

\begin{displaymath}
(Sf)(t)=\left\{\begin{array}{cc}f(t) & \mbox{ if }t\in[1,2)\\
{\bigl[f(t)+f(t-k+1)\bigr]}\big/{k^2} & \mbox{ if }t\in[k,k+1),\ 1<k\in\mathbb N.\end{array}\right.
\end{displaymath}
\smallskip

 It is straightforward to calculate $\sigma(T)=\{\frac{1}{k}:k\in\mathbb N\}\cup\{0\}$, and $\sigma(S)=\{\frac{1}{k^2}:k\in\mathbb N\}\cup\{0\}$.
Write $p\left(\frac{1}{k},T\right)=:P_k$ and $p\left(\frac{1}{k^2},S\right)=:Q_k$. If $k\in\mathbb N$ and $f\in X$ then it follows readily, by Cauchy's formula, that:
\begin{enumerate}
\item{ $(P_kf)(t)=\chi_{[k,k+1)}(t)f(t)$,}
\item{ $Q_1=P_1$,}
\item{ $(Q_kf)(t)=\chi_{[k,k+1)}(t)\displaystyle{\left[f(t)+\frac{f(t-k+1)}{1-k^2}\right]}$ \ \ $(k\not=1)$.}
\end{enumerate}
Then $P_kQ_l=Q_l$ if $k=l$, and $P_kQ_l=0$ if $k\not=l$. In terms of Theorem~\ref{charf}, we observe
that $W=\{\frac{1}{k}-\frac{1}{k^2}:k\in\mathbb N\}$, $W_\lambda=\{\frac{1}{2},\frac{1}{3}\}$, and $W_\beta=\emptyset$. Thus
$\varrho(T,S)=\frac{1}{2}$. Also, $Q_kP_l=P_l$ if $k=l$, and $Q_kP_l=0$ if $k\not=l$ implies that $\varrho(S,T)=\frac{1}{2}$. So
$\rho(T,S)=\frac{1}{2}.$

\end{example}

\bibliographystyle{amsplain}

\begin{thebibliography}{99}
\bibitem{alexander}{\it H.~Alexander and J.~Wermer}: {Several complex variables and Banach algebras}, Springer-Verlag, 1998. Zbl 0894.46037, MR 1482798
\bibitem{b+r} {\it R.~Brits and H.~Raubenheimer}: {Finite spectra and quasinilpotent equivalence in Banach algebras}. Czechoslovak Mathematical Journal {\bf 62} (2012), 1101--1116. Zbl 1274.46094, MR 3010259
\bibitem{c+f} {\it I.~Colojoar\v a and C.~Foia\c s}: {Quasinilpotent equivalence of not necessarily commuting operators}. J. Math. Mech. {\bf 15} (1966), 521--540. Zbl 0138.07701, MR 0192344
\bibitem{f+v} {\it C.~Foia\c s and F.-H.~Vasilescu}: {On the spectral theory of commutators}.  J. Math. Anal. Appl. {\bf 31} (1970), 473--486. Zbl 0175.13604, MR 0290146
\bibitem{l+n} {\it K.B.~Laursen and M.M.~Neumann}: {An introduction to local spectral theory}. Oxford University Press, 2000. Zbl 0957.47004, MR 1747914
\bibitem{levin} {\it B.~Ya.~Levin}: {Lectures on entire functions}. AMS 1996. Zbl 0856.30001, MR 1400006
\bibitem{razpet} {\it M.~Razpet}: {The quasinilpotent equivalence in Banach algebras}. J. Math. Anal. Appl. {\bf 166} (1992), 378--385. Zbl 0802.46064, MR 1160933
\bibitem{vas3}{\it F.-H.~Vasilescu}: {Analytic Functional Calculus and Spectral Decompositions}. Editura Academiei and D. Reidel Publishing Company, 1982. Zbl 0495.47013, MR 0690957
\bibitem{vas1}{\it F.-H.~Vasilescu}: {Some properties of the commutator of two operators}. J. Math. Anal. Appl. {\bf 23} (1968), 440--446. Zbl 0159.43402, MR 0229078
\bibitem{vas2}{\it F.-H.~Vasilescu}: {Spectral distance of two operators}. Rev. Roumaine Math. Pures Appl. {\bf 12} (1967), 733--736. Zbl 0156.38204, MR 0222699
\end{thebibliography}

\end{document}